\documentclass[leqno,12pt,draft]{amsart}

\usepackage{amssymb}
\usepackage{amsmath}
\usepackage{enumerate}
\usepackage{amsfonts}
\usepackage{color}

\numberwithin{equation}{section}

\newtheorem{lema}[equation]{Lemma}
\newtheorem{teo}[equation]{Theorem}

\newtheorem{coro}[equation]{Corollary}
\begin{document}
\title[Riesz transform for Schr\"odinger operator]{Potential-free  $L^1$-estimates for positivity-preserving Riesz transform related to Schr\"odinger operator\\ in dimension one }
\author{Jacek Dziuba\'{n}ski }
\address{Instytut Matematyczny, Uniwersytet Wroc\l awski, pl. Grunwaldzki 2/4, 50-384 Wroc\l aw, Poland} \email{jdziuban@math.uni.wroc.pl }
\subjclass[2000]{35J10, 47D08 (primary); 47G10, 42B37 (secondary)}
\keywords{Riesz transforms, Schr\"odinger operators}
\thanks{}

\maketitle

{\it In memory of Jacek Zienkiewicz (1967--2023)}

\begin{abstract} Let $V\geq 0$ be a locally integrable function on $\mathbb R$. Consider the Schr\"odinger operator $L=-\frac{d^2}{dx^2} +V$. We prove that for all $0<a\leq 1$, there is a constant $C_a$, independent of $V$, such that the Riesz transform type operator $V^aL^{-a}$ is bounded on $L^1(\mathbb R)$ and $\| V^aL^{-a}f\|_{L^1(\mathbb R)}\leq C_a\|f\|_{L^1(\mathbb R)}$.

\end{abstract}


\section{Introduction} In \cite{K-W} Kucharski and Wróbel studied positivity-preserving Riesz transforms
$$R^a_V=V^{a}(-\Delta+V)^{-a}$$ associated with the  Schr\"odinger operator $L=-\Delta+V$, where $V\in L^1_{\rm loc}(\mathbb R^d)$ is a non-negative potential and $0\leq a\leq 1$. They showed that for any fixed $p\in (1,2]$ and $0\leq a\leq 1/p$ there is a constant $C=C(a,p)$, independent of $V$ and  $d$, such that
\begin{equation}\label{KW}
 \big\|V^aL^{-a}f\big\|_{L^p(\mathbb R^d)} \leq C\|f\|_{L^p(\mathbb R^d)}.
\end{equation}

Furthermore,   if $V(\boldsymbol x)=V_1(x_1)+V_2(x_2)+\dots + V_d(x_d)$, where $\boldsymbol x=(x_1,x_2,\dots, x_d)$,
$m|x_j|^\alpha\leq V_j(x_j)\leq M|x_j|^\alpha$ with some $0<\alpha\leq 2$, and $0<m\leq M$,  then for all $0<a\leq 1$, the operator  $V^aL^{-a}$ is bounded on $L^1(\mathbb R^d)$ (see  Kucharski \cite[Theorem 1.2]{K2}).

The goal of this note is to prove that in dimension one the estimates \eqref{KW} hold for $p=1$ and any locally integrable non-negative potential $V$, and are independent of $V$, which is stated in the following theorem.
\begin{teo}\label{main_theo}
    Let $0<a\leq 1$. Then there is a constant $C_a>0$ such that for any $V\in L^1_{\rm loc}(\mathbb R)$, $V\geq 0$,  one has

    \begin{equation}
        \Big\|V^a\Big(-\frac{d^2}{dx^2}+V\Big)^{-a} f\Big\|_{L^1(\mathbb R)}\leq C_a\| f\|_{L^1(\mathbb R)}.
    \end{equation}
\end{teo}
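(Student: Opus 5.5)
By positivity and duality, the estimate in Theorem~\ref{main_theo} is equivalent to a uniform pointwise bound for the adjoint acting on $V^a$. We may first replace $V$ by the bounded truncations $V_N=\min(V,N)$ and pass to the limit at the end by monotone convergence. The kernel $L^{-a}(x,y)$ is symmetric and nonnegative, so $\|V^aL^{-a}\|_{L^1\to L^1}=\sup_y\int V(x)^aL^{-a}(x,y)\,dx$. Hence it suffices to prove
\[
L^{-a}(V^a)(x)=\frac{1}{\Gamma(a)}\int_0^\infty t^{a-1}\,\mathbb E_x\Big[V(B_t)^a\,e^{-A_t}\Big]\,dt\le C_a \qquad (x\in\mathbb R),
\]
where $A_t=\int_0^tV(B_s)\,ds$ and $B$ is Brownian motion generated by $\frac{d^2}{dx^2}$ (Feynman--Kac).

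For $a=1$ this holds in any dimension. Indeed, $V(B_t)e^{-A_t}=-\frac{d}{dt}e^{-A_t}$, so the integral equals $\mathbb E_x[1-e^{-A_\infty}]\le1$. For $0<a<1$ there is no such identity, and the one-dimensional structure has to be used. The plan is to introduce, for each $x$, a critical scale $\rho(x)>0$ defined by $\rho\int_{x-\rho}^{x+\rho}V(y)\,dy=1$, setting $\rho=\infty$ if no such $\rho$ exists. Such a $\rho$ exists by monotonicity and continuity of $r\mapsto r\int_{x-r}^{x+r}V$. We then split the $t$-integral at $\tau=\rho(x)^2$.

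\emph{Large times $t>\tau$.} We use H\"older's inequality in the product measure $dt\,d\mathbb P_x$ with exponents $1/a$ and $1/(1-a)$:
\[
\int_\tau^\infty\! t^{a-1}\mathbb E_x[V(B_t)^ae^{-A_t}]\,dt\le\Big(\mathbb E_x\!\int_0^\infty\! V(B_t)e^{-A_t}dt\Big)^{a}\Big(\int_\tau^\infty \frac{\mathbb E_x e^{-A_t}}{t}\,dt\Big)^{1-a}.
\]
The first factor is at most $1$ by the $a=1$ computation. For the second factor we need decay of $\mathbb E_xe^{-A_t}$ once $t\gg\rho(x)^2$, and here dimension one enters. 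Write $A_t=\int V(y)\,\ell_t(y)\,dy$ with $\ell_t$ the local time. With probability close to one, $\ell_t\gtrsim\sqrt t$ on the whole interval $[x-\sqrt t,x+\sqrt t]$. Moreover, the definition of $\rho$ gives $\int_{|y-x|<\sqrt t}V\ge\rho^{-1}$ for $\sqrt t\ge\rho$, so $A_t\gtrsim\sqrt t/\rho$ on that event. Iterating over dyadic blocks of time via the Markov property should then yield $\mathbb E_xe^{-A_t}\le Ce^{-c(t/\tau)^{\delta}}$ for some $\delta>0$. This makes $\int_\tau^\infty t^{-1}\mathbb E_xe^{-A_t}\,dt$ bounded by an absolute constant.

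\emph{Small times $t<\tau$.} We drop part of the killing carefully. A bare Jensen bound $\mathbb E_x[V(B_t)^a]\le(p_t*V(x))^a$ is not enough, because Gaussian tails see mass of $V$ far from $x$, where $V$ is not controlled. Instead, decompose according to whether $B$ has left $I_k=[x-2^k\sqrt t,x+2^k\sqrt t]$ before time $t$. On the event that $B$ stays in $I_k$, Jensen gives a contribution $\lesssim e^{-c4^k}\big(t^{-1/2}\int_{I_k}V\big)^a$. When $2^k\sqrt t\le\rho$ we have $\int_{I_k}V\le\rho^{-1}$, and $\int_0^{\tau}t^{a-1}t^{-a/2}\rho^{-a}\,dt\approx1$. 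When $2^k\sqrt t>\rho$, we are back in the regime where the path has covered a region of $V$-mass larger than $1/\rho$. There the factor $e^{-A_t}$ must be retained and handled as in the large-time case, with the Gaussian factor $e^{-c4^k}$ absorbing the growth of $\int_{I_k}V$, which is only linear in $|I_k|$ after the $\rho$-normalisation.

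The step I expect to be the main obstacle is the quantitative lower bound on the Feynman--Kac killing at scales beyond $\rho(x)$, needed both for $t>\tau$ and for paths making large excursions when $t<\tau$. This bound must hold with constants independent of $V$. It relies on local-time coverage of intervals, which is genuinely one-dimensional (in higher dimensions Brownian motion does not charge sets of positive measure this way). If the probabilistic estimate proves awkward, I would first try to obtain it deterministically. In one dimension the Green function of $L+\lambda$ is $u_-(\min)u_+(\max)/W$, built from the monotone solutions $u_\pm$ of $-u''+Vu=-\lambda u$. Ricatti/Agmon-type comparisons for $(\log u_\pm)'$ give decay $\exp(-c\int_x^y\rho(s)^{-1}ds)$. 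The semigroup bound $\mathbb E_xe^{-A_t}$ then follows by Laplace inversion, or the subordination formula can be rewritten directly in terms of the resolvents $(L+\lambda)^{-1}$.
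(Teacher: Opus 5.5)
Your reduction to $\sup_y\int R_V^a(x,y)\,dx$, the $a=1$ identity, the critical scale $\rho$, and the H\"older step in $dt\,d\mathbb P_x$ for $t>\rho^2$ all match the paper. The small-time regime, however, contains a step that fails. For $2^k\sqrt t>\rho$ you let $e^{-c4^k}$ absorb $(t^{-1/2}\int_{I_k}V)^a$, claiming that $\int_{I_k}V$ grows only linearly in $|I_k|$ after the $\rho$-normalisation. For general $V\in L^1_{\rm loc}$ this is false. The definition of $\rho$ controls $\int_{x-r}^{x+r}V$ only for $r\le\rho$, and beyond that $V$ is unconstrained: add mass $M$ at distance $2\rho$ from $x$. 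Then $\rho(x)$ is unchanged, while $\int_{I_k}V\ge M$ once $2^k\sqrt t\ge 2\rho$, so your bound blows up as $M\to\infty$.

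The missing idea is that far from $x$ you never need to look at $V$; this is exactly what makes the estimate potential-free (Part 3 of the paper). On $\{t<\rho^2,\ |B_t-x|>\rho\}$, apply the same H\"older inequality in $(t,\omega)$ as in your large-time step. Bound the first factor by the global estimate $\int_0^\infty\mathbb E_x[V(B_t)e^{-A_t}]\,dt\le1$, and the second by $\int_0^{\rho^2}\mathbb P_x(|B_t-x|>\rho)\,\frac{dt}{t}$, which is an absolute constant by scaling. On the complement, Jensen with $\int_{x-\rho}^{x+\rho}V=\rho^{-1}$ and $p_t\le(4\pi t)^{-1/2}$ gives a bound $\lesssim\int_0^{\rho^2}t^{a/2-1}\rho^{-a}\,dt=2/a$ (Part 2 of the paper). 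No decomposition of paths by exit times is needed.

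The large-time step is also not established, and its claimed form is wrong. The bound $\mathbb E_xe^{-A_t}\le Ce^{-c(t/\tau)^\delta}$ fails in general: for $V$ a bump of mass $\rho^{-1}$ concentrated at $x$, the path escapes and $\mathbb E_xe^{-A_t}$ is of order $(\tau/t)^{1/2}$. A single coverage estimate only gives $\mathbb E_xe^{-A_t}\le\eta+e^{-c\sqrt t/\rho}$. A Markov iteration would then have to control paths drifting to where $V$ is sparse, since $\rho(B_s)$ is only bounded by $\rho(x)+|B_s-x|$, and you have not done this.

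Polynomial decay suffices, and the paper obtains it without local times. Set $\psi(z)=1+\frac12\int_{16I^\diamond}V(w)|z-w|\,dw$. It satisfies $\psi''\le V$ and $\psi\ge1$, so $L\psi\ge0$ and $K_t\psi\le\psi$. The normalisation $|I^\diamond|\int_{16I^\diamond}V=1$ gives $\psi(z)\simeq1+|z-x_I|/|I|$. Splitting $\int k_t(x,y)\,dy$ at $|y-x_I|=R$ then yields $R\,t^{-1/2}+|I|/R\lesssim(|I|^2/t)^{1/4}$ for $R=|I|^{1/2}t^{1/4}$, which is integrable against $dt/t$ on $(|I|^2,\infty)$. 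With your $\rho$ centred at $x$, the same argument works with $\int_{x-\rho}^{x+\rho}$ in place of $\int_{16I^\diamond}$, so the dyadic cover is not essential.
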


\begin{coro}\label{Coro1}
    Suppose $V(\boldsymbol x)=V_1(x_1)+V_2(x_2)+...+V_d(x_d)$, where $V_j\in L^1_{\rm loc} (\mathbb R)$, $V_j\geq 0$, $j=1,2,\dots,d$, $\boldsymbol x=(x_1,x_2,\dots , x_d)\in \mathbb R^d$. Then for $0<a\leq 1$,
    $$\| V^a(-\Delta +V)^{-a}f\|_{L^1(\mathbb R^d)}\leq C_ad\|f\|_{L^1(\mathbb R^d)}.$$
\end{coro}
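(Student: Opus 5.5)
The plan is to reduce the $d$-dimensional estimate to $d$ one-dimensional estimates, each handled by Theorem \ref{main_theo}, using the tensor structure of the semigroup and positivity. Write $L=-\Delta+V=\sum_{j=1}^d L_j$, where $L_j=-\partial_{x_j}^2+V_j(x_j)$ acts only in the variable $x_j$. The $L_j$ commute and are nonnegative, so
$$e^{-tL}=e^{-tL_1}\cdots e^{-tL_d}.$$
Let $L'_j=\sum_{i\neq j}L_i$, so that $e^{-tL}=e^{-tL_j}e^{-tL'_j}$. By Feynman--Kac (or the Trotter product formula and $V_i\geq 0$), each semigroup is positivity preserving. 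Its kernel is pointwise dominated by the corresponding Gaussian heat kernel, hence it is a contraction on $L^1$ with $\int p_t(x,y)\,dx\leq 1$. The same then holds for
$$L^{-a}=\Gamma(a)^{-1}\int_0^\infty t^{a-1}e^{-tL}\,dt .$$

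\textbf{Step 1 (reduction to single coordinates).} Since $0<a\leq 1$, subadditivity gives $(\sum_j V_j)^a\leq \sum_j V_j^a$. Because $L^{-a}$ is positivity preserving, $|L^{-a}f|\leq L^{-a}|f|$. Hence
$$|V^aL^{-a}f|\leq \sum_{j=1}^d V_j^a L^{-a}g,\qquad g=|f|\geq 0.$$
It therefore suffices to show $\|V_j^aL^{-a}g\|_{L^1(\mathbb R^d)}\leq C_a\|g\|_{L^1(\mathbb R^d)}$ for each $j$ and each $g\geq0$, with $C_a$ the constant of Theorem \ref{main_theo}.

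\textbf{Step 2 (integrating out the other variables).} Fix $j$ and write $\boldsymbol x=(x_j,x')$. Set $G(x_j)=\int g(x_j,x')\,dx'$, so that $\|G\|_{L^1(\mathbb R)}=\|g\|_{L^1(\mathbb R^d)}$. For each $t$, the operator $e^{-tL_j}$ acts only in $x_j$ and has a nonnegative kernel, so by Tonelli integration in $x'$ commutes with it. The operator $e^{-tL'_j}$ acts only in $x'$ and is a sub-Markovian $L^1$ contraction, so $\int e^{-tL'_j}g(\cdot,x')\,dx'\leq G$. Combining these,
$$\int e^{-tL}g(x_j,x')\,dx'=e^{-tL_j}\Big(\int e^{-tL'_j}g(\cdot,x')\,dx'\Big)(x_j)\leq e^{-tL_j}G(x_j).$$
Multiplying by $t^{a-1}/\Gamma(a)$, integrating in $t$, and multiplying by $V_j(x_j)^a$, which does not depend on $x'$, gives
$$\int V_j^a L^{-a}g(x_j,x')\,dx'\leq V_j^a L_j^{-a}G(x_j).$$

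\textbf{Step 3.} Integrate in $x_j$ and apply Theorem \ref{main_theo} to the one-dimensional potential $V_j$. This yields
$$\|V_j^aL^{-a}g\|_{L^1(\mathbb R^d)}\leq C_a\|G\|_{L^1(\mathbb R)}=C_a\|f\|_{L^1(\mathbb R^d)}.$$
Summing over $j$ gives the constant $C_a d$.

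The only point needing care is the justification in Step 2: the factorization $e^{-tL}=e^{-tL_j}e^{-tL'_j}$ for form-defined operators with merely $L^1_{\rm loc}$ potentials, and the sub-Markov property of $e^{-tL'_j}$. Both follow from the standard domination $0\leq p_t^{V}\leq p_t$ and the fact that commuting nonnegative self-adjoint operators produce product semigroups. I expect this to be routine rather than a real obstacle; the substance lies entirely in Theorem \ref{main_theo}.
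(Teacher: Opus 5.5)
Your proof is correct and follows essentially the same route as the paper: the product structure $k_t(\boldsymbol x,\boldsymbol y)=\prod_{j} k_t^{\{j\}}(x_j,y_j)$, the subadditivity $(\sum_j V_j)^a\le\sum_j V_j^a$, integrating out the coordinates $x_i$, $i\neq j$, via $\int k_t^{\{i\}}(x_i,y_i)\,dx_i\le 1$ (from the Gaussian bound), and then applying Theorem~\ref{main_theo} in each coordinate. The only difference is presentational: you carry out the Fubini step at the operator level through the marginal $G(x_j)=\int g(x_j,x')\,dx'$, whereas the paper bounds the column integrals $\sup_{\boldsymbol y}\int R_V^a(\boldsymbol x,\boldsymbol y)\,d\boldsymbol x$ of the kernel directly.
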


\begin{coro}\label{Coro2}
    Suppose $V\in L^1_{\rm loc}(\mathbb R^d)$ is such that there are non-negative functions $V_1,V_2,\dots, V_d\in L^1_{\rm loc}(\mathbb R)$ and constants $C,c>0$ such that
    \begin{equation*}
        c(V_1(x_1)+V_2(x_2)+\dots V_d(x_d))\leq V(x_1,x_2,\dots, x_d)\leq C(V_1(x_1)+V_2(x_2)+\dots V_d(x_d)).
    \end{equation*}
    Then, for $0<a\leq 1$,
    \begin{equation}\label{eq:VV}
        \|V^a(-\Delta+V)^{-a} f\|_{L^1(\mathbb R^d)}\leq \Big(\frac{C}{c}\Big)^aC_a d\|f\|_{L^1(\mathbb R^d)}.
    \end{equation}
\end{coro}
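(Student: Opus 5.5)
The plan is to reduce Corollary~\ref{Coro2} to Corollary~\ref{Coro1}, applied to the separable potential $W=cV_1(x_1)+\dots+cV_d(x_d)$. The two ingredients are a pointwise domination of the semigroups and the trivial inequality $V^a\leq (C/c)^a W^a$. Each $cV_j$ is a non-negative $L^1_{\rm loc}(\mathbb R)$ function, so Corollary~\ref{Coro1} gives
$$\|W^a(-\Delta+W)^{-a}g\|_{L^1(\mathbb R^d)}\leq C_a d\|g\|_{L^1(\mathbb R^d)}.$$

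First I would write the negative powers by the subordination formula
$$L^{-a}g=\frac{1}{\Gamma(a)}\int_0^\infty t^{a-1}e^{-tL}g\,dt,$$
valid for $0<a\leq1$ and $g\geq0$, with values possibly infinite a priori. This makes sense on non-negative functions and preserves positivity, so $|L^{-a}f|\leq L^{-a}|f|$. It therefore suffices to treat $f\geq0$.

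The key step is the pointwise comparison. Since $V\geq cV_1+\dots+cV_d=W\geq0$, the Feynman--Kac formula gives, for $g\geq0$,
$$e^{-t(-\Delta+V)}g(x)=\mathbb E_x\Big[e^{-\int_0^t V(B_s)\,ds}g(B_t)\Big]\leq \mathbb E_x\Big[e^{-\int_0^t W(B_s)\,ds}g(B_t)\Big]=e^{-t(-\Delta+W)}g(x).$$
Alternatively, the same inequality follows from the Trotter product formula, or from domination of semigroups for form sums, which is the natural tool if $V$ is only locally integrable. Integrating against $t^{a-1}dt$ yields
$$0\leq(-\Delta+V)^{-a}f\leq(-\Delta+W)^{-a}f.$$

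Combining this with $V^a\leq C^a(V_1+\dots+V_d)^a=(C/c)^aW^a$, I get pointwise
$$V^a(-\Delta+V)^{-a}f\leq (C/c)^a\,W^a(-\Delta+W)^{-a}f.$$
Taking $L^1$ norms and applying Corollary~\ref{Coro1} gives \eqref{eq:VV}.

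The only delicate point is justifying the semigroup domination for merely locally integrable potentials. I would handle it by truncating, $V_n=\min(V,n)$ and similarly $W_n$, where Trotter applies directly. I would then pass to the limit using monotone convergence of the quadratic forms, which gives strong convergence of the semigroups.
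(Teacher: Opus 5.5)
Your proposal is correct and follows essentially the same route as the paper. Both arguments combine the Feynman--Kac domination $k_t^{V}\le k_t^{c\widetilde V}$ with $V^a\le (C/c)^a(c\widetilde V)^a$ and then apply Corollary \ref{Coro1} (the bound \eqref{eq:RaV}) to $c\widetilde V$; the paper writes this as the kernel inequality $R^a_V(\boldsymbol x,\boldsymbol y)\le (C/c)^a R^a_{c\widetilde V}(\boldsymbol x,\boldsymbol y)$, which is equivalent to your pointwise inequality for $f\ge 0$.
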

The proofs of Theorem \ref{main_theo} and the corollaries are contained in Section \ref{Section4}.
\section{Schr\"odinger operators and associated Riesz transforms}
For $V\in L^1_{\rm loc}(\mathbb R)$, $V\geq 0$, consider the quadratic form
\begin{equation}\label{form} Q(f,f)=\int_{\mathbb R} \frac{d}{dx}f(x)\overline{\frac{d} {dx}f(x)}\, dx +\int_{\mathbb R} V(x)f(x)\overline{f(x)}\, dx,
\end{equation}
with its domain $\mathcal D(Q)=\{f\in W_2^1:\int_{\mathbb R}V(x)|f(x)|^2<\infty\}$, where
$$W_2^1(\mathbb R)=\{f\in L^2(\mathbb R): \xi \widehat f(\xi)\in L^2(\mathbb R)\}$$ is the inhomogeneous Sobolev space. Here, $\hat f(\xi)$ is the Fourier transform of $f$.  The form $Q$ on its domain is closed and defines a non-negative self-adjoint Schr\"odinger operator $L$ denoted for simplicity by $L=-\frac{d^2}{dx^2}+V$.
Let
$$Lf=\int_0^\infty \lambda\, dE(\lambda)f$$
be its spectral resolution.

The quadratic form \eqref{form} can also be read as
\begin{equation}
    \label{form2}
    \Big\|\frac{d}{dx} f\Big\|_{L^2(\mathbb R)}^2 + \big\|V^{1/2}f\big\|_{L^2(\mathbb R)}^2= \big\|L^{1/2} f\big\|_{L^2(\mathbb R)}^2
\end{equation}

For $\varepsilon>0$, let $L_\varepsilon=(\varepsilon I+L)$. Then $L_\varepsilon^{-1/2}$ is a bounded operator on $L^2(\mathbb R)$ and, by \eqref{form2},
\begin{equation}\label{form3}
    \Big\| \frac{d}{dx} L_\varepsilon^{-1/2} f\Big\|_{L^2(\mathbb R)}^2+ \big\|V^{1/2}L_\varepsilon^{-1/2}\big\|_{L^2(\mathbb R)}^2=\big\| L^{1/2} L_\varepsilon^{-1/2}f\big\|_{L^2(\mathbb R)}^2, \quad f\in L^2(\mathbb R).
\end{equation}
It follows from \eqref{form2} that 0 is not an eigenvalue for $L$, that  is, $E(\{0\})=0$. Hence, by the spectral theorem, the operators $L^{1/2}L_\varepsilon^{-1/2}$ converge strongly to the identity operator, as $\varepsilon$ tents to 0. Consequently, by \eqref{form2}, for $f\in L^2(\mathbb R)$ the limits
$$\lim_{\varepsilon \to 0} \frac{d}{dx} R_\varepsilon^{-1/2} f\quad \text{\rm and} \quad  \lim_{\varepsilon \to 0} V^{1/2} R_\varepsilon^{-1/2} f$$
exist in the $L^2(\mathbb R)$-norm and define bounded linear operators, denoted $R^{1/2}$ and $R_V^{1/2}$, respectively. Moreover,
$$ \big\|R^{1/2}f\big\|_{L^2(\mathbb R)}^2+\big\| R_V^{1/2}f\big\|_{L^2(\mathbb R)}^2 =\| f\|_{L^2(\mathbb R)}^2.$$

Let
$$Q_V(f,f)=\int_{\mathbb R} V(x)f(x)\overline{f(x)}\, dx$$
be the quadratic form of the non-negative linear densely defined self-adjoint operator $f\mapsto V(x)f(x)$. Since $0\leq Q_V(f,f)\leq Q(f,f)$, for $f\in\mathcal D(Q)$,
\begin{equation}
 \|V^{\beta/2}f\|_{L^2(\mathbb R)}^2=   \int_{\mathbb R} V^{\beta}(x) f(x)\overline{f(x)}\, dx \leq \int_{\mathbb R} L^{\beta/2}f(x)\overline{L^{\beta/2}f(x)}\, dx =\|L^{\beta/2}f\|_{L^2(\mathbb R)}^2
\end{equation}
for all $0<\beta\leq 1$ and all $f\in \mathcal D(L^{\beta/2})=\{f\in L^2(\mathbb R): \int_0^\infty \lambda^{\beta}\, dE_{f,f}(\lambda)<\infty\}$ (see e.g. Davies \cite[Lemma 4.20]{Davies}) .

For $0<\beta\leq 1$ and $\varepsilon >0$,  the operators $\mathcal R^{\beta/2}_{\varepsilon , V} =V^{\beta/2}(\varepsilon I+L)^{-\beta/2}$ are bounded on $L^2(\mathbb R)$.   Similarly to  above, the limit
$$\lim_{\varepsilon\to 0} V^{\beta/2}(\varepsilon I+L)^{-\beta/2}f$$ exists for all $f\in L^2(\mathbb R)$  and defines a contraction on $L^2(\mathbb R)$, denoted by $V^{\beta/2}L^{-\beta/2}$. Our result stated in Theorem \ref{main_theo} asserts in particular that the operator, initially defined on $L^2(\mathbb R)\cap L^1(\mathbb R)$, has a unique extension to a bounded operator on $L^1(\mathbb R)$ and its norm is bounded by a~universal constant $C_{\beta/2}$  (independent of $V$). Actually, the bound $L^1$ for $V^{\beta/2}L^{-\beta/2}$ holds for a wider range of parameters, namely for $0<\beta\leq 2$.

\section{Kernels of operators}

The operator $-L$ generates a strongly continuous semigroup $\{K_t\}_{t>0}$ of self-adjont contractions on $L^2(\mathbb R)$. By the Feynman-Kac formula the semigroup $\{K_t\}_{t>0}$ has the form
\begin{equation}\label{integral} K_tf(x)=\int_{\mathbb R} k_t(x,y)f(y)\, dy,
\end{equation}
where
\begin{equation}\label{k<h}
    0<k_t(x,y)\leq \frac{1}{\sqrt{4\pi t}} e^{-|x-y|^2/4t}.
\end{equation}
If $1\leq p<\infty$, then formula \eqref{integral} defines a strongly continuous semigroup of linear contractions on $L^p(\mathbb R)$.

The negative power $(\varepsilon I+L)^{-a}$  of the operator  $(\varepsilon I+K)$, which is a bounded operator on $L^p(\mathbb R)$,   is given by
\begin{equation}
    (\varepsilon I+L)^{-a}f=\Gamma (a)^{-1} \int_0^\infty t^ae^{-\varepsilon t} K_t f\frac{dt}{t}.
\end{equation}
Consequently,
$$ R_{\varepsilon, V}^a(x,y)=\Gamma (a)^{-1}V^a(x)\int_0^\infty t^{a}e^{-\varepsilon t}k_t(x,y)\frac{dt}{t}$$
is the integral kernel of the operator $V^{a}(\varepsilon I+L)^{-a}$. Observe that
$$0\leq  R_{\varepsilon, V}^a(x,y)\leq R_{\varepsilon', V}^a(x,y)\quad \text{\rm for } \ 0<\varepsilon'\leq \varepsilon $$
and
\begin{equation}\label{kernel-R_a}\lim_{\varepsilon \to 0^+} R_{\varepsilon, V}^a(x,y)=\Gamma(a)^{-1}V(x)^a\int_0^\infty t^ak_t(x,y)\frac{dt}{t}=: R^a_V(x,y).
\end{equation}
Thus, the proof of Theorem \ref{main_theo} reduces to proving that for $0<a\leq 1$ there is a constant $C_a$ independent of $V$ such that
\begin{equation}\label{enough} \sup_{y\in\mathbb R} \int_{\mathbb R}  R^{a}_{V}(x,y)\, dx =\sup_{y\in\mathbb R} \Gamma(a)^{-1}\int_{\mathbb R}\int_0^\infty V(x)^at^a k_t(x,y)\frac {dt}{t}\, dx\leq C_a.
\end{equation}

\section{Estimates of kernels - proof of Theorem \ref{main_theo} and corollaries}\label{Section4}

The proof of Theorem \ref{main_theo} uses methods  of Czaja-Zienkiewicz \cite{CZ}. For the convenience of the reader, we provide all the details.

\subsection{Cover of $\mathbb R$}
For an interval $I$ and a nonnegative number $a$,  we denote by $aI$ the interval with the same center $x_I$ of $I$ and the length $a$-times longer.
If $I$ is a dyadic interval, then $\tilde I$ denotes a unique dyadic interval such that $I\subset \tilde I$, and $|\tilde I|=2|I|$, where $|I|$ stands for the length of $I$.

Fix a non-negative locally integrable potential $V$ on $\mathbb R$. Following \cite{CZ}, we say that an interval $I$ belongs to $ \mathcal I$, if $I$ is a (unique) maximal dyadic interval satisfying
\begin{equation}\label{def-I} |I|\int_{16I} V(y)\, dy \leq 1.
\end{equation}

\begin{lema}
    Assume  that $I,J\in\mathcal I$, $|I|\leq |J|$, and $\text{\rm dist}\,(I,J)\leq |J|$. Then
    $$ |I|\geq \frac{1}{4} |J|.$$
\end{lema}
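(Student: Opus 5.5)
We argue by contradiction: assume $|I|<\frac14|J|$, so $|I|\le \frac18|J|$ since both lengths are powers of $2$. The aim is to show that the dyadic ancestor $I'$ of $I$ with $|I'|=4|I|\le \frac12|J|$ still satisfies \eqref{def-I}. That would contradict the maximality of $I$. Strictly, maximality means the parent $\tilde I$ fails \eqref{def-I}. It is then enough to treat $\tilde I$ alone: its length is $2|I|\le |J|/4$, and $|\tilde I|\int_{16\tilde I}V$ is controlled by $|J|\int_{16J}V$ once we know $16\tilde I\subset 16J$.

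The key step is therefore the containment $16\tilde I\subset 16J$. The interval $16J$ has center $x_J$ and radius $8|J|$. Every point of $I$ is within $\text{dist}(I,J)+|J|/2\le \frac32|J|$ of $x_J$. Since $I\subset\tilde I$ and $|\tilde I|\le |J|/4$, every point of $\tilde I$, in particular its center, is within $\frac32|J|+\frac14|J|$ of $x_J$. The radius of $16\tilde I$ is $8|\tilde I|\le 2|J|$. Hence every point of $16\tilde I$ lies within
$$\tfrac32|J|+\tfrac14|J|+2|J|<8|J|$$
of $x_J$, so $16\tilde I\subset 16J$.

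From this containment and $V\ge0$ we get
$$|\tilde I|\int_{16\tilde I}V\le \tfrac14|J|\int_{16J}V\le \tfrac14\le 1,$$
using \eqref{def-I} for $J$. Thus $\tilde I$ satisfies \eqref{def-I}, contradicting the maximality of $I$. Therefore $|I|\ge\frac14|J|$.

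The only delicate point is the meaning of maximality. We take it to mean that $I$ satisfies \eqref{def-I} while no strictly larger dyadic interval containing it does. In particular, the parent $\tilde I$ violates \eqref{def-I}, which is exactly what is contradicted above.
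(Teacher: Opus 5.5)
Your proof is correct and is the same argument as the paper's. Assuming $|I|\le\frac18|J|$, you show $16\tilde I\subset 16J$, hence $|\tilde I|\int_{16\tilde I}V\le\frac14$, which contradicts maximality of $I$ (i.e.\ $|\tilde I|\int_{16\tilde I}V>1$); you also verify the containment that the paper only asserts, though one phrase there needs fixing: only the point of $\overline{I}$ nearest to $J$, not every point of $I$, is guaranteed to lie within $\frac32|J|$ of $x_J$, but measuring from that point your bound $\frac32|J|+\frac14|J|$ for all of $\tilde I$ still holds, so the conclusion is unaffected.
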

\begin{proof}
     Aiming for a contradiction suppose that $|I|<\frac{1}{4}|J|$, which means that $|I|\leq\frac{1}{8}|J|$. Then $16\tilde I\subset 16J$. Hence,
     $$ |\tilde I| \int_{16\tilde I}V(y)\, dy \leq \frac{1}{4} |J|\int_{16J} V(y)\, dy \leq \frac{1}{4},$$
     where in the last inequality, we have used \eqref{def-I} for $J$ instead of $I$.
   On the other hand,  by  the definition of $I$,
    $$1< |\tilde I|\int_{16\tilde I}V(y)\, dy $$
   and we arrive at a contradiction.
\end{proof}

Observe that if $V\not\equiv 0$, then the intervals  $I$ from the set $\mathcal I$ have disjoint interiors and form  a cover of $\mathbb R$.

\subsection{Subharmonic functions}\label{subharmonic}
By continuity, for each $I\in\mathcal I$, there is an interval $I^{\diamond}$ such that  $I\subseteq I^\diamond \subset \tilde I$ and
\begin{equation}\label{diamond}
    |I^\diamond |\int_{16I^\diamond} V(y)\, dy=1.
\end{equation}
We define
\begin{equation}\label{def-psi} \psi_I(x)=1+ \int_{16I^{\diamond}}V(y)\frac{1}{2}|x-y|\, dy.
\end{equation}
\begin{lema}
    \begin{equation}\label{psi_in} \frac{1}{64} \Big(1+\frac{|x-x_I|}{|I|}\Big) \leq \psi_I(x)\leq 64 \Big(1+\frac{|x-x_I|}{|I|}\Big).
    \end{equation}
\end{lema}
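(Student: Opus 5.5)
The plan is to work with the normalized quantities $\ell=|I^\diamond|$ and the measure $d\mu=V(y)\,dy$ restricted to $16I^\diamond$. By \eqref{diamond}, $\mu(16I^\diamond)=1/\ell$. Since $I\subseteq I^\diamond\subset\tilde I$, we have $|I|\le \ell\le 2|I|$, and the centers satisfy $|x_I-x_{I^\diamond}|\le |I|/2$. Every comparison below therefore loses only harmless factors of $2$, and we may freely replace $x_I,|I|$ by the center $c$ and length $\ell$ of $I^\diamond$. The function $\psi_I(x)-1=\frac12\int_{16I^\diamond}|x-y|\,d\mu(y)$ is a $\mu$-average of $|x-y|$ times total mass $1/\ell$. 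So $\psi_I(x)-1$ behaves like $\frac{1}{2\ell}\,\mathrm{dist}$-type quantities, and the statement splits into a near region and a far region.

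\textbf{Upper bound.} For $y\in 16I^\diamond$ we have $|x-y|\le |x-c|+8\ell$. Hence
$$\psi_I(x)\le 1+\tfrac12\cdot\tfrac1\ell\,(|x-c|+8\ell)=5+\tfrac{|x-c|}{2\ell}.$$
Converting to $x_I,|I|$ using $|x-c|\le |x-x_I|+|I|/2$ and $\ell\ge |I|$ gives a bound of the form $C(1+|x-x_I|/|I|)$ with $C$ well below $64$.

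\textbf{Lower bound.} Trivially $\psi_I\ge 1$, which handles $|x-x_I|\le 63|I|$ (there $1+|x-x_I|/|I|\le 64$). For $|x-x_I|> 63|I|$ the point $x$ lies far outside $16I^\diamond$: $|x-c|\ge 63|I|-|I|/2$, while $16I^\diamond$ has radius $8\ell\le 16|I|$. Hence for every $y\in 16I^\diamond$,
$$|x-y|\ge |x-c|-16|I|\ge \tfrac12|x-x_I|,$$
after a crude check using $|x-x_I|>63|I|$. Integrating against $\mu$ gives
$$\psi_I(x)\ge \tfrac12\cdot\tfrac12|x-x_I|\cdot\tfrac1\ell\ge \tfrac{|x-x_I|}{8|I|}\ge \tfrac1{16}\Big(1+\tfrac{|x-x_I|}{|I|}\Big),$$
which is better than $1/64$.

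\textbf{Main obstacle.} There is no real obstacle; the only care needed is the bookkeeping between $I$ and $I^\diamond$. In particular, $\mu(16I^\diamond)$ is known exactly only for $I^\diamond$, not for $I$, and the near/far threshold must be chosen so that $16I^\diamond$ is genuinely separated from $x$. The generous constant $64$ leaves ample room for this.
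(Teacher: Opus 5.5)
Your proof is correct and follows essentially the same route as the paper: a near/far split, the triangle inequality using that $16I^\diamond$ has radius $8|I^\diamond|\le 16|I|$ and center within $|I|/2$ of $x_I$, and the normalization $|I^\diamond|\int_{16I^\diamond}V=1$ with $|I|\le|I^\diamond|\le 2|I|$. The differences are only cosmetic: you prove the upper bound globally rather than separately on $64I$ and its complement, and you split the lower bound at $63|I|$ instead of at $x\notin 64I$. Your constants all check out.
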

\begin{proof}
    If $x\in 64 I$ and $y\in 16I^{\diamond}$,   then $|x-y|\leq 64 |I|$. Hence, using the definition of $\psi_I$ together with \eqref{diamond}, we get
    $$1\leq \psi_I(x)\leq1+ 32\int_{16 I^{\diamond}}V(y)|I|\, dy \leq 33,$$
    which gives \eqref{psi_in} in this case.

       Now, if $x\notin 64 I$ and $y\in 16 I^{\diamond}$, then $|x-x_I|/4\leq |x-y|<|x-x_I|+17|I|$, and, consequently,  \eqref{psi_in} follows by applying \eqref{def-psi} together with \eqref{diamond}.
\end{proof}

\begin{lema}
    Let $I\in\mathcal I$. For $y\in  I$, and $t>|I|^2$, we have
    \begin{equation}\label{zenek}
        \int_{\mathbb R} k_t(x,y)\, dx \leq (1+2\cdot 64^2)\frac{|I|^{1/2}}{t^{1/4}}
    \end{equation}
\end{lema}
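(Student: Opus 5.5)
The plan is to combine the pointwise Gaussian bound \eqref{k<h} with a ``supersolution'' argument for the weight $\psi_I$ of \eqref{def-psi}. The weight gains a factor $|x-x_I|/|I|$ far from $I$ by \eqref{psi_in}. The key claim is that $\psi_I$ is $L$-superharmonic, so that $\int k_t(x,y)\psi_I(x)\,dx$ stays bounded by a constant for $y\in I$ and all $t>0$. Splitting the integral $\int k_t(x,y)\,dx$ at a radius $R$ around $x_I$ and optimizing $R$ then gives the factor $|I|^{1/2}t^{-1/4}$.

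\emph{Step 1 (superharmonicity).} Since $\frac12|x|$ is a fundamental solution of $\frac{d^2}{dx^2}$, in the distributional sense
$$\psi_I''(x)=V(x)\mathbf 1_{16I^\diamond}(x)\leq V(x).$$
Because $\psi_I\geq 1$, this gives
$$L\psi_I=-\psi_I''+V\psi_I\geq -V+V\psi_I\geq 0.$$
Hence, for $0\le\varphi\in C_c^\infty$, the function $t\mapsto \langle K_t\varphi,\psi_I\rangle$ is nonincreasing. Formally its derivative is $-\langle K_t\varphi, L\psi_I\rangle\le 0$, using positivity of $k_t$.

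To make this rigorous, I would truncate $\psi_I$ by $\psi_I\eta_N$ with cutoffs $\eta_N$. Integrating by parts via the form $Q$ of \eqref{form}, the derivative of $\langle K_t\varphi,\psi_I\eta_N\rangle$ is $-\langle K_t\varphi,L\psi_I\rangle$ plus error terms involving $\eta_N'$ and $\eta_N''$. These errors vanish as $N\to\infty$, because $\psi_I$ grows only linearly and $k_t$ has the Gaussian bound \eqref{k<h}. Alternatively, one could argue directly from the Feynman--Kac representation. Letting $\varphi$ approximate $\delta_y$ and $t\to 0$, and using continuity of $\psi_I$, we obtain
$$\int_{\mathbb R} k_t(x,y)\psi_I(x)\,dx\leq \psi_I(y)\qquad \text{for all } t>0.$$
For $y\in I$, the proof of \eqref{psi_in} gives $\psi_I(y)\leq 33\le 64$. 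I expect this step, the rigorous justification of monotonicity and the passage to the limit, to be the main technical obstacle. The algebra itself is simple.

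\emph{Step 2 (splitting).} Fix $R>0$ and write
$$\int k_t(x,y)\,dx=\int_{|x-x_I|\le R}k_t(x,y)\,dx+\int_{|x-x_I|>R}k_t(x,y)\,dx.$$
For the first integral, \eqref{k<h} gives the bound $2R(4\pi t)^{-1/2}\le R\,t^{-1/2}$. For the second integral, \eqref{psi_in} gives $1\le 64\,\frac{|I|}{R}\,\psi_I(x)$ whenever $|x-x_I|>R$. Combined with Step 1, this bounds the second integral by
$$64\,\frac{|I|}{R}\cdot 64=2\cdot 64^2\cdot\frac{|I|}{2R}.$$

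\emph{Step 3 (optimization).} Choose $R=|I|^{1/2}t^{1/4}$. Then both terms equal a constant times $|I|^{1/2}t^{-1/4}$, and the total is at most
$$(1+64^2)\,|I|^{1/2}t^{-1/4}\le(1+2\cdot 64^2)\,|I|^{1/2}t^{-1/4},$$
which is \eqref{zenek}. The condition $t>|I|^2$ plays no role in the computation itself. It only ensures $R>|I|$, so that the split is nontrivial and the resulting bound is smaller than the trivial bound $1$.
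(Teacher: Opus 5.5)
Your proposal is correct and follows essentially the same route as the paper. Both arguments rest on three ingredients: the supersolution property $K_t\psi_I\le\psi_I$, which comes from $\psi_I''=V\mathbf 1_{16I^\diamond}\le V\le V\psi_I$; the two-sided weight bound \eqref{psi_in}; and the split at $R=|I|^{1/2}t^{1/4}$. The only differences are cosmetic: you bound $\psi_I(y)\le 33$ for $y\in I$, which gives the slightly better constant $1+64^2$, and you work in the first variable by duality rather than swapping $x$ and $y$ at the end via the symmetry of $k_t$.
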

\begin{proof}
    The lemma is included in the proof of Lemma 2.2 of \cite{CZ}.
    The only task is to keep an eye on the constants. For the reader's convenience, we repeat the arguments after \cite{CZ}.

 Since $\frac{1}{2}|x|$ is the fundamental solution for $\frac{d^2}{dx^2}$, we have
$$ \psi_I''(x)=V(x)\chi_{I^{\diamond}}(x),$$
and, consequently, $\psi_I$ is a subharmonic function of $L$, that is,
\begin{equation}\label{negative} -L\psi_I\leq V\chi_{I^{\diamond}}-V\leq 0.\end{equation}
Since  $K_t$ preserve the positivity of functions, inequity \eqref{negative} implies
\begin{equation}\label{monotone}
    K_t\psi_I-\psi_I=-\int_0^tK_sL\psi_I\, ds\leq 0.
\end{equation}
 For $x\in I$, using \eqref{psi_in} together with  \eqref{monotone},  we obtain
\begin{equation}\begin{split}\label{eq:64}
    \int_{\mathbb R} k_t(x,y)\Big(1+\frac{|y-x_I|}{|I|}\Big)\, dy &\leq 64K_t\psi_I(x)\leq 64 \psi_I(x)\\
    & \leq 64^2\Big(1+\frac{|x-x_I|}{|I|}\Big) \leq 2 \cdot  64^2.
\end{split}\end{equation}
So, using \eqref{k<h} together with \eqref{eq:64}, for $x\in I$ and $R>0$, we have
\begin{equation*}
    \begin{split}
        \int k_t(x,y)\, dy&= \int_{|y-x_I|<R}  k_t(x,y)\, dy+\int_{|y-x_I|\geq R}  k_t(x,y) \Big(1+\frac{|y-x_I|}{|I|}\Big) \Big(1+\frac{R}{|I|}\Big)^{-1}\, dy\\
        &\leq \frac{1}{\sqrt{4\pi t}} R+2\cdot 64^2\frac{|I|}{R}.
    \end{split}
\end{equation*}
Setting $R=\sqrt{|I|}t^{1/4}$, we get
\begin{equation}\label{int_k_t}
    \begin{split}
        \int k_t(x,y)\, dy&\leq (1+ 2\cdot 64^2)\frac{|I|^{1/2}}{t^{1/4}}\quad \text{for } x\in I.
    \end{split}
\end{equation}
Now we may swap $x$ and  $y$ in \eqref{int_k_t}, because $k_t(x,y)=k_t(y,x)$.
\end{proof}

\begin{lema}\label{Lemma_a=1}
    For all $y\in\mathbb R$, one has
    \begin{equation}\label{bound_1}
      \int_0^\infty   \int_{\mathbb R} V(x)k_t(x,y)\, dx\, dt\leq 1.
    \end{equation}
\end{lema}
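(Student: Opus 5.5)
This is the $a=1$ case of \eqref{enough}, and I will prove it by a duality/conservation argument that uses only positivity and the generator relation, not the cover $\mathcal I$. The identity to exploit is, formally,
$$\frac{d}{dt}K_t 1 = -K_t L1 = -K_t V,$$
since $-\frac{d^2}{dx^2}1=0$. Integrating in $t$ from $0$ to $T$ gives $1-K_T1(y)=\int_0^T K_tV(y)\,dt$. By symmetry of $k_t$, $K_tV(y)=\int_{\mathbb R} V(x)k_t(x,y)\,dx$. Since $K_T1\ge 0$, letting $T\to\infty$ and using monotone convergence gives \eqref{bound_1}. Indeed, I expect the stronger identity $\int_0^\infty\int V k_t\,dx\,dt = 1-\lim_T K_T1(y)\le 1$.

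To make this rigorous, note that $1\notin L^2$ and $V$ is only locally integrable, so I will regularize.

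\emph{First, truncate.} I will replace $V$ by $V_n=\min(V,n)\chi_{[-n,n]}$ and write $k^{(n)}_t$ for the corresponding kernels. By the Feynman--Kac formula, $k^{(n)}_t\downarrow k_t$ monotonically as $n\to\infty$. Hence $V_n(x)k^{(n)}_t(x,y)\ge V_n(x)k_t(x,y)$, and $V_n k_t\uparrow Vk_t$. By monotone convergence it therefore suffices to prove the bound for bounded, compactly supported $V$.

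\emph{Second, justify the identity for bounded $V$.} For bounded $V$, the semigroup $K_t$ acts on $L^\infty$ (weak$^*$ continuously), and I can apply Duhamel's formula against the heat semigroup $H_t$:
$$K_t1=H_t1-\int_0^t H_{t-s}(VK_s1)\,ds.$$
Since $H_t1=1$, this gives $1-K_t1=\int_0^t H_{t-s}(VK_s1)\,ds$.

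This is not literally the same as $\int_0^t K_sV\,ds$. To get that form I will instead use the Duhamel formula in the opposite order: $K_t=H_t-\int_0^tK_{s}VH_{t-s}\,ds$. Applying it to $1$ gives
$$1-K_t1=\int_0^t K_s(V\cdot 1)\,ds=\int_0^t K_sV\,ds.$$
Both sides are well defined and nonnegative, since $V$ is bounded with compact support.

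An alternative to the Duhamel route is to pair with a test function. Let $0\le g\in C_c^\infty$ and take cutoffs $\phi_R\uparrow 1$, with $\phi_R$ smooth, $\phi_R=1$ on $[-R,R]$ and $|\phi_R''|\le C/R^2$. Then
$$\frac{d}{dt}\langle K_t\phi_R,g\rangle=\langle \phi_R'' - V\phi_R,K_tg\rangle.$$
The $\phi_R''$ term is bounded by $C R^{-2}\cdot t\,\|g\|_1$ after integrating over $[0,T]$, so it vanishes as $R\to\infty$. Either route yields
$$\int_0^T\int V(x)k_t(x,y)\,dx\,dt=1-K_T1(y)\le 1$$
for a.e. $y$, and for every $y$ by continuity of $k_t$ in $y$ for bounded $V$.

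The main obstacle is exactly this justification step: handling the non-$L^2$ function $1$ and exchanging the limits $R\to\infty$, $T\to\infty$, $n\to\infty$. I will keep every quantity nonnegative and use only monotone convergence, so that no uniformity is needed. Finally, passing from a.e. $y$ to all $y$ can be done by noting that the left side of \eqref{bound_1} is lower semicontinuous in $y$ (Fatou, using continuity of $k_t$) and therefore bounded by $1$ wherever the bound holds densely; alternatively, the a.e. version already suffices for \eqref{enough}.
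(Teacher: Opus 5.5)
Your proposal is correct and is essentially the paper's argument. The paper integrates the perturbation formula $h_t(x,y)=k_t(x,y)+\int_0^t\int h_{t-s}(x,z)V(z)k_s(z,y)\,dz\,ds$ in $x$, which is exactly your Duhamel identity $1-K_t1=\int_0^t K_sV\,ds$ (obtained from $H_{t-s}1=1$); it then drops the nonnegative term $K_t1(y)$ and lets $t\to\infty$. The only difference is that you supply the justification the paper leaves implicit by quoting the perturbation formula as well known, namely the truncation $V_n\uparrow V$ with Feynman--Kac monotonicity and the action of $K_t$ on $L^\infty$ for bounded $V$.
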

\begin{proof}
    The lemma is well-known and follows from the perturbation formula
    \begin{equation}\label{perturb}
        h_t(x,y)=k_t(x,y)+\int_0^t\int_{\mathbb R} h_{t-s}(x,z)V(z)k_s(z,y)\, dz\, ds.
    \end{equation}
 Indeed, integrating the equation with respect to $dx$, and using \eqref{k<h}, we get
    $$1\geq \int_0^t \int_{\mathbb R} V(z)k_s(z,y)\, dz\, ds.$$
    Letting $t\to\infty$, we get the desired bound.
\end{proof}

\subsection{Proof of Theorem \ref{main_theo}}

\begin{proof}[Proof of Theorem \ref{main_theo}] The theorem for $a=1$ is Lemma \ref{Lemma_a=1}.  In order to prove the theorem for $0<a<1$, it suffices to establish \eqref{enough}, where the kernel $R_{V}^a(x,y)$ is defined in \eqref{kernel-R_a}.

Fix $y\in\mathbb R$. Let $I\in\mathcal I$ be such that $y\in I$.
We split  the integral \eqref{enough} into three parts.

{\bf Part 1.} By the H\"older inequality, with $p=1/a$ and $p'=1/(1-a)$, we get
\begin{equation}
    \begin{split}
     J_1:=  &\Gamma(a)^{-1} \int_{\mathbb R} \int_{|I|^2}^\infty  V^a(x)k_t(x,y)\, t^{a-1}\, dx \, dt \\
       &\leq \Gamma(a)^{-1}\Big(\int_{\mathbb R}\int_{|I|^2}^\infty V(x)k_t(x,y)dt\, dx\Big)^{a}\Big(\int_{\mathbb R}\int_{|I|^2}^\infty k_t(x,y)\frac{dt}{t}\, dx\Big)^{1-a}\\
       \end{split}
       \end{equation}
Applying \eqref{bound_1} to the first factor and \eqref{zenek} to the second, we obtain

       \begin{equation}
           \begin{split}
     J_1  &\leq \Gamma(a)^{-1}\Big(\int_{|I|^2}^\infty (1+2\cdot 64^2)\frac{|I|^{1/2}}{t^{1/4}}\frac{dt}{t}\, dx\Big)^{1-a}\leq \Gamma(a)^{-1}(1+2\cdot 64^2)^{1-a}4^{1-a}.
    \end{split}
\end{equation}
{\bf Part 2.} Using the Fubini theorem, the H\"older inequality, \eqref{k<h}, and the definition
of $I^{\diamond}$ (see Subsection \ref{subharmonic}), we get

\begin{equation}
    \begin{split}
     J_2:=  & \Gamma(a)^{-1}\int_{16I^{\diamond}} \int_0^{|I|^2} V^a(x)k_t(x,y)\, t^{a-1}\, \, dt\, dx\\
       & \leq \Gamma(a)^{-1} \int_0^{|I|^2}\Big(\int_{16I^{\diamond}} V(x)k_t(x,y) \, dx\Big)^{a}\Big(\int_{16 I^{\diamond}}k_t(x,y)\, dx\Big)^{1-a}t^{a-1}\, dt\\
       &\leq \Gamma(a)^{-1}\int_0^{|I|^2} (|I|^{-1} (4\pi t)^{-1/2})^a t^{a-1}\, dt
       \leq \Gamma(a)^{-1}\frac{2}{a} (4\pi)^{-a/2}.
    \end{split}
\end{equation}
{\bf Part 3.} Again, by the Fubini theorem,  the H\"older inequality together with \eqref{k<h} and \eqref{bound_1}, we obtain
\begin{equation*}
    \begin{split}
      J_3: & =\Gamma(a)^{-1} \int_{(16I^{\diamond})^c} \int_0^{|I|^2}V^a(x)k_t(x,y)\, t^{a-1}\,  dt\, dx \\
       &\leq \Gamma(a)^{-1}\Big(\int_0^{|I|^2}\int_{(16I^{\diamond})^c} V(x)k_t(x,y)\, dx\, dt\Big)^a \Big(\int_0^{|I|^2}\int_{(16I^{\diamond})^c} \frac{1}{\sqrt{4\pi t}} e^{|x-y|^2/4t}\, dx\, \frac{dt}{t}\Big)^{1-a} \\
       &\leq \Gamma(a)^{-1}\Big(\int_0^{|I|^2}\int_{(16I^{\diamond})^c} \frac{1}{\sqrt{4\pi t}} e^{|x-x_I|^2/16t}\, dx\, \frac{dt}{t}\Big)^{1-a},\\
    \end{split}
\end{equation*}
where in the last inequality we have used the relation $y\in I$.
Finally, observe that
\begin{equation}
    \begin{split}
        \int_0^{|I|^2}\int_{(16I^{\diamond})^c} \frac{1}{\sqrt{4\pi t}} e^{|x-x_I|^2/16t}\, dx\, \frac{dt}{t}&
        \leq \int_0^{|I|^2} \int_{|x|>|I|} \frac{1}{\sqrt{4\pi t}} e^{-|x|^2/16t}  \, dx \, \frac{dt}{t}\\
        &=\frac{1}{\sqrt{4\pi}}\int_{|x|>1} 2 \ln |x|e^{-x^2/16}\, dx=: C'.
    \end{split}
\end{equation}
Hence $J_3\leq \Gamma(a)^{-1} C'^{(1-a)}$.

Combining Parts 1-3, we obtain the bound \eqref{enough} with $C_a$ independent of $V$.
\end{proof}

\subsection{Proofs of Corollaries \ref{Coro1} and \ref{Coro2}}

\begin{proof}[Proof of Corollary \ref{Coro1}]
    Let $k_t^{\{j\}}(x,y)$ denote the integral kernel of the semigroup generated by the one dimensional Schr\"odinger operator $\frac{d^2}{dx^2}-V_j(x)$. Then the integral kernel $k_t(\boldsymbol x,\boldsymbol y)$ of the semigroup generated by $\Delta-V$ has the form
    \begin{equation*}
        k_t(\boldsymbol x,\boldsymbol y)=\prod_{j=1}^d k^{\{j\}}_t(x_j,y_j).
        \end{equation*}
        So, the corresponding integral kernel $R_V^a(\boldsymbol x,\boldsymbol y)$ of the Riesz transform $V^a(-\Delta+V)^{-a}$ is estimated as follows:
        \begin{equation}\label{R-bound2}
            \begin{split}
              0\leq  R_V^a(\boldsymbol x,\boldsymbol y)&=\Gamma(a)^{-1}\Big(\sum_{j=1}^dV_j(x_j)\Big)^a\int_0^\infty t^a\prod_{j=1}^d k^{\{j\}}_t(x_j,y_j)\frac{dt}{t}\\
                &\leq \sum_{j=1}^d \Gamma(a)^{-1} V_j(x_j)^a \int_0^\infty t^a\prod_{j=1}^d k_t^{\{j\}}(x_j,y_j)\frac{dt}{t}.
            \end{split}
        \end{equation}
        Using \eqref{R-bound2}, \eqref{k<h}, Theorem \ref{main_theo} together with the Fubini theorem, we conclude that
        \begin{equation}\label{eq:RaV}
         \sup_{\boldsymbol y\in\mathbb R^d}   \int_{\mathbb R^d} R_V^a(\boldsymbol x,\boldsymbol y)\, d\boldsymbol x\leq C_a d,
        \end{equation}
which implies the corollary. \end{proof}

\begin{proof}[Proof of Corollary \ref{Coro2}]
    Set $\widetilde V(\boldsymbol x)=V_1(x_1)+V_2(x_2)+\dots +V_d(x_d)$, where, as above,  $\boldsymbol x=(x_1,x_2,\dots x_d)$. Let $k_t^{c\tilde V}(\boldsymbol x,\boldsymbol y)$, $k_t^{C\widetilde V}(\boldsymbol x,\boldsymbol y)$, $k_t^{ V}(\boldsymbol x,\boldsymbol y)$ denote the integral kernels of the semigrpups generated by $\Delta-c\widetilde V$, $\Delta -C\widetilde V$, $\Delta -V$, respectively. From the  Feynman-Kac formula, we conclude  that
    $$ k_t^{C\widetilde V}(\boldsymbol x,\boldsymbol y)\leq k_t^{V}(\boldsymbol x,\boldsymbol y)\leq k_t^{c\widetilde V}(\boldsymbol x,\boldsymbol y).$$
    Hence, see \eqref{kernel-R_a},
    $$ 0\leq R_V^a(\boldsymbol x,\boldsymbol y)\leq \Big(\frac{C}{c}\Big)^a R_{c\widetilde V}^a(\boldsymbol x,\boldsymbol y).$$
    Now \eqref{eq:VV} follows from \eqref{eq:RaV} applied to $c\widetilde V$.
\end{proof}


\begin{thebibliography}{}
\bibitem{CZ} W. Czaja, J. Zienkiewicz,  \emph{Atomic characterization of the Hardy space  $H^1_L(\mathbb R)$   of one-dimensional Schrödinger operators with nonnegative potentials,}
Proc. Amer. Math. Soc. 136 (2008), no. 1, 89--94.


\bibitem{Davies} E.B. Davies, \emph{One parameter semi-groups,}
(Academic Press, London, 1980), viii + 230 pp.


\bibitem{K-W} M. Kucharski and B. Wróbel, \emph{On
$L^p$
 estimates for positivity-preserving Riesz transforms related to Schrödinger operators}, Annales de l’Institut Fourier,  to appear, DOI : 10.5802/aif.3744.

 \bibitem{K2} M. Kucharski, \emph{Dimension-free estimates for positivity-preserving Riesz transforms related to Schrödinger operators with certain potentials}, Studia Math. 288 (2026), 285--299
\end{thebibliography}
\end{document}